\documentclass{article}

\usepackage{amsmath,amssymb,url}
\usepackage{enumitem} 
\usepackage{framed, xcolor, nicefrac}
\usepackage{amsopn, amsthm}
\usepackage{tikz-cd}
\usetikzlibrary{graphs, graphs.standard, shapes.geometric, decorations.pathreplacing}

\numberwithin{equation}{section}

\theoremstyle{plain}
\newtheorem{theorem}{Theorem}[section]
\newtheorem{lemma}[theorem]{Lemma}

\theoremstyle{definition}
\newtheorem{definition}[theorem]{Definition}

\newtheorem{remark}[theorem]{Remark}

\renewenvironment{proof}[1][\unskip]{%
\par
\noindent
\textbf{Proof #1.}
\noindent}
{\hfill$\blacksquare$

\bigskip}

\usepackage{etoolbox}

\newcommand{\defqedsymbol}{\hfill\ensuremath{\square}} 

\AtBeginEnvironment{definition}{\pushQED{\defqedsymbol}}
\AtEndEnvironment{definition}{\popQED}

\let\<=\langle
\let\>=\rangle
\def\land{\wedge}       
 
\def\Land{\bigwedge}    
\def\Lor{\bigvee}

\let\notmodels=\nvDash

\renewcommand{\setminus}{\smallsetminus}

\def\eps{\varepsilon}
\def\setminus{-}

\def\M{\mathsf{M}}

\def\Pa{\mathsf{Pa}}

\def\BB{\bar{\mathbf{B}}}

\def\A{\mathbf{A}}
\def\B{\mathbf{B}}
\def\U{\mathbf{U}}

\def\Q{\mathbf{Q}}
\def\K{\mathsf{K}}

\def\Sg{\mathbf{Sg}}
\def\calF{\mathcal{F}}
\def\calG{\mathcal{G}}

\usepackage[T1]{fontenc}
\usepackage[utf8]{inputenc}

\begin{document}


\title{On covers of quasivarieties of p-algebras}

\author{Zal\'an Gyenis \\ {\small \texttt{zalan.gyenis@uj.edu.pl}} \\ {\small Jagiellonian University}}
\date{\today}

\maketitle

\begin{abstract}
	This paper characterizes the covers of varieties of p-algebras in the
	lattice of quasivarieties of p-algebras. In particular, it is shown
	that every such variety has exactly one cover in the lattice of
	subquasivarieties. This answers a problem of 
	Kowalski and Słomczyńska \cite{KowSlom}.
\end{abstract}

\noindent {\bf Subject classifications: } 06D15, 08B20, 06D20, 08B15 

\noindent {\bf Keywords: } Distributive p-algebras, Quasivarieties, Implication-free fragment of intuitionism.



\section{Overview}

This article characterizes covers of subvarieties 
in the lattice of subquasivarieties of p-algebras, settling Question 3 of 
Kowalski and Słomczyńska \cite{KowSlom}. I follow the notation and terminology
of \cite{KowSlom}, but to make the paper self-contained, I recall below the
most important facts from \cite{KowSlom} that will be needed. Fundamentals
of universal algebra (say, quasivarieties, subalgebras, etc.) are not recalled, 
but the readers might consult \cite{Bergman,BurrisSankappanavar}. 

P-algebras are bounded distributive lattices endowed with
a unary operation $*$ satisfying the condition $x\land y=0$ iff $x\leq y^*$. 
It is known that p-algebras form a variety $\Pa$, and by a result of 
Lee \cite{Lee} the lattice
of subvarieties of $\Pa$ is a chain
\[
	\Pa_{-1}\subsetneq \Pa_0\subsetneq \Pa_1 \subsetneq \cdots\subsetneq \Pa
\]
of order type $\omega+1$, where $\Pa_{-1}$ is the trivial variety, and
$\Pa_k = \mathbf{HSP}(\BB_k)$ for $k\in \mathbb{N}$. Here
$\BB_k$ is the $k$-atom Boolean algebra with an extra cover of the top 
element. In particular, $\BB_0$ is the two-element Boolean algebra, and $\Pa_0$
is the variety of Boolean algebras. 

$\Pa$ is locally finite (finitely generated p-algebras are finite, 
see \cite{BermanDwinger}), and for any $m>0$ the variety $\Pa_m$ is 
axiomatized by a single identity 
\[
	\Lor_{i=1}^{m+1}(x_i\land\Land_{j\neq i}x_j^*)^* =1.\tag{$\mathbf{ib}_m$}
\]
(see \cite{19} and Lemma 1.3 in \cite{KowSlom}).\\

The main tool that is used in this paper is Priestley-duality 
for finite p-algebras. Objects dual to finite p-algebras are finite 
posets, and morphisms are order-preserving maps commuting with the
maximal elements of the poset. Following \cite{KowSlom}, these morphisms
are called pp-morphisms. Let $(P, \leq)$ be a finite poset, and for $x\in P$
write $\M(x)$ for the set of maximal elements in $P$ above $x$. 
(By the same token, $\M(P)$ is the set of all maximal elements of $P$). 
Then $h:(P,\leq)\to (Q, \leq)$ is a pp-morphism, if it preserves the ordering, 
and $\M(h(x))=h[\M(x)]$ for every $x\in P$. 

For a finite p-algebra $\A\in\Pa$ write $\delta(\A)$ for its dual poset; 
and for a finite poset $P$ write $\eps(P)$ for its dual p-algebra. Elements
of $\delta(\A)$ are the join-irreducible elements of $\A$ ordered by the
converse of the order inherited from $\A$; while elements of $\eps(P)$
are the upsets of the poset. For finite $\A,\B\in\Pa$ 
we have $\A\subseteq \B$ if and only if there is a surjective pp-morphism
$\delta(\A)\twoheadrightarrow\delta(\B)$. Conversely, for finite posets
$P$ and $Q$, there is a surjective pp-morphism $P\twoheadrightarrow Q$
if and only if $\eps(Q)\subseteq \eps(P)$. (Here and later on $\subseteq$ 
between algebras means subalgebra). Recall (see \cite{19}) what 
the dual posets of $\BB_m$ look like:
\begin{figure}[ht!]
    \begin{center}
        \begin{tikzpicture}[thick,scale=0.95]
			\node (A0) at (-2,0.5) {};
			\draw [fill] (A0) circle [radius=.06];
			\draw (-2,-1) node {$\delta(\BB_0)$};

			\node (A1) at (0,0) {};
			\node (A2) at (0,1) {};
			\draw [fill] (A1) circle [radius=.06];
			\draw [fill] (A2) circle [radius=.06];
			\draw (A1.center) -- (A2.center);			
			\draw (0,-1) node {$\delta(\BB_1)$};

			\node (B1) at (2,0) {};
			\node (B2) at (1.5,1) {};
			\node (B3) at (2.5,1) {};
			\draw [fill] (B1) circle [radius=.06];
			\draw [fill] (B2) circle [radius=.06];
			\draw [fill] (B3) circle [radius=.06];
			\draw (B3.center) -- (B1.center) -- (B2.center);			
			\draw (2,-1) node {$\delta(\BB_2)$};

			\node (C1) at (5,0) {};
			\node (C2) at (5-0.5,1) {};
			\node (C3) at (5-1,1) {};
			\node (C4) at (5+1,1) {};
			\node (C5) at (5+0.5,1) {};
			\draw [fill] (C1) circle [radius=.06];
			\draw [fill] (C2) circle [radius=.06];
			\draw [fill] (C3) circle [radius=.06];
			\draw [fill] (C4) circle [radius=.06];
			\draw [fill] (C5) circle [radius=.06];
			\draw (C1.center) -- (C2.center);			
			\draw (C1.center) -- (C3.center);			
			\draw (C1.center) -- (C4.center);			
			\draw (C1.center) -- (C5.center);			
			\draw (5,-1) node {$\delta(\BB_m)$};

			\node at (5, 1) {$\ldots$};
			
			\draw [decorate, decoration={brace, amplitude=5pt}] 
			    ([yshift=0.2cm]C3.center) -- ([yshift=0.2cm]C4.center) 
			    node [midway, above, yshift=0.15cm] {$m$ points};

		\end{tikzpicture}
		\caption{The dual posets $\delta(\BB_m)$ of the p-algebras
		$\BB_m$ generating $\Pa_m$.}
		\label{fig:BBm}
	\end{center}
\end{figure}

\noindent The following lemma is used frequently. (Below $\uplus$ denotes disjoint union).

\begin{lemma}[Lemma 1.7 in \cite{KowSlom}] \label{1.7}
	Let $\mathcal{P}$ be a family of finite posets, and let $X$
	be a finite poset. Then $\eps(X)\in \Q\big(\{\eps(P): P\in\mathcal{P}\}\big)$
	if and only if for some $P_1, \ldots, P_n\in\mathcal{P}$
	there is a surjective pp-morphism 
	$P_1\uplus\cdots\uplus P_n\twoheadrightarrow X$.
\end{lemma}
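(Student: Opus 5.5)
The plan is to combine the standard description of quasivarieties, $\Q(\mathcal{K})=\mathbf{ISPP_U}(\mathcal{K})$, with local finiteness of $\Pa$, and then dualize. First, a finite algebra lies in $\mathbf{ISPP_U}(\mathcal{K})$ for a class $\mathcal{K}$ of finite algebras of finite type if and only if it lies in $\mathbf{ISP}(\mathcal{K})$. Hence it embeds into a finite product of members of $\mathcal{K}$. Indeed, if $\eps(X)$ embeds into a product $\prod_{i\in I}\A_i$, then for each pair of distinct elements of the finite algebra $\eps(X)$ we pick a coordinate separating them. Finitely many coordinates $i_1,\ldots,i_n$ then suffice, and the composite with the projection is an embedding $\eps(X)\hookrightarrow \A_{i_1}\times\cdots\times\A_{i_n}$. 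The ultraproduct part is absorbed because ultraproducts of finite algebras from a fixed family, in a finite language, satisfy the same universal first-order sentences as the family. For a finite subalgebra generated by $k$ elements, the existence of a copy inside $\prod_U \A_i$ is expressed by an existential sentence with a finite diagram. By Łoś this sentence holds in $U$-many $\A_i$, so the finite algebra already embeds into some $\A_i$.

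Second, I dualize. Under Priestley duality for finite p-algebras, a finite product $\eps(P_1)\times\cdots\times\eps(P_n)$ corresponds to the disjoint union $P_1\uplus\cdots\uplus P_n$, since the upsets of a disjoint union are tuples of upsets. I also need to check that $*$ on the product is computed coordinatewise, which holds since $\Pa$ is a variety and products are taken componentwise. By the recalled fact, $\eps(X)\subseteq \eps(P_1\uplus\cdots\uplus P_n)$ if and only if there is a surjective pp-morphism $P_1\uplus\cdots\uplus P_n\twoheadrightarrow X$; this is the subalgebra/surjection correspondence stated before the lemma. Combining the two steps yields both directions. For the converse direction, a surjective pp-morphism gives an embedding into the product, and that product lies in $\mathbf{ISP}(\ldots)\subseteq\Q(\ldots)$.

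The main obstacle is the reduction from $\mathbf{ISPP_U}$ to finite products. My first approach is the separating-coordinates argument above. The ultraproduct step uses that each $\eps(P)$ is finite, but the family $\mathcal P$ may contain posets of unbounded size. So I argue with the finite diagram of the generated subalgebra, as sketched, rather than assuming the ultraproduct is itself one of the algebras. This works for embedding a \emph{finite} algebra, which is all that is needed. A minor check is that $\eps$ of a disjoint union is the product in the category of p-algebras, i.e.\ that pseudocomplement is preserved coordinatewise. This is immediate from the definition of $*$ on upsets: $U^*$ is the set of points with no element of $U$ above them.
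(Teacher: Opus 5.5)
Your argument is correct. The paper itself gives no proof of this statement: it imports it verbatim as Lemma 1.7 of \cite{KowSlom}, so there is no in-paper route to compare against.

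What you give is the standard proof. A finite algebra of finite signature lies in $\Q(\mathcal K)=\mathbf{ISPP}_{\mathbf U}(\mathcal K)$ if and only if it embeds in a finite product of members of $\mathcal K$. You get this by separating pairs to reduce to finitely many coordinates, and then applying {\L}o\'s to the finite projections $\pi_{i_j}[\eps(X)]$ inside the ultraproducts. This correctly avoids assuming any bound on $|P|$ for $P\in\mathcal P$. You then combine $\eps(P_1\uplus\cdots\uplus P_n)\cong\eps(P_1)\times\cdots\times\eps(P_n)$ (including the componentwise pseudocomplement, which you check) with the subalgebra/surjection correspondence.

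Two small remarks. First, local finiteness of $\Pa$, announced in your plan, is never used; only finiteness of $\eps(X)$ and of the signature matter. Second, you rightly rely on the second formulation of the duality in the overview, namely $P\twoheadrightarrow Q$ iff $\eps(Q)\subseteq\eps(P)$. The first formulation there is stated with the arrow reversed: an embedding $\A\hookrightarrow\B$ dualizes to a surjection $\delta(\B)\twoheadrightarrow\delta(\A)$.
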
 

Considering the lattice of quasivarieties of p-algebras, it is known that
the interval $[\Pa_{-1}, \Pa_2]$ is a $4$-element chain. The recent paper 
\cite{KowSlom}
lists $3$ finite posets (see Figure \ref{fig:3poset} below) and claims
that $\Q(\eps(P))$ for any of these three posets is a cover of $\Pa_2$, and \cite{KowSlom}
asks the question:

\medskip
\noindent {\bf Question 3.} \emph{Are there any other covers of $\Pa_2$? More generally, what are the covers of $\Pa_m$?}
\medskip

This paper answers both questions, and corrects the claim in \cite{KowSlom} about the covers of $\Pa_2$.
More precisely, Theorem \ref{thm:unique-cover} states that for every $m\geq 1$, the variety 
$\Pa_m$ has exactly one cover in the lattice of quasivarieties of p-algebras, and this cover is
$\Q\bigl(\BB_m, \U_{m+1} \bigr)$, where $\U_{m+1}$ is the dual algebra of the poset
$U_{m+1} = \bigl(\wp(m+1)\setminus\{\emptyset\},\supseteq\bigr)$ (see Figure \ref{fig:um}).
Combining this with the known result that the interval $[\Pa_{-1}, \Pa_2]$ is a $4$-element chain, gives that $\Pa_m$ has a unique cover for every $m$ (see Figure \ref{fig:bottom}). Section \ref{mketto} discusses the case $m=2$ carefully, and Theorems \ref{thm:uccso-elotti} and \ref{thm:uccso} show that exactly one of the posets from \cite{KowSlom} (see Figure \ref{fig:3poset} below) generate a cover of $\Pa_2$. This corrects a claim in \cite{KowSlom}.

\section{Basic tools}

\begin{definition}
	Let $M$ be a non-empty set and $\calF$ a possibly empty family of 
	non-empty subsets of $M$. Let
	\[
		P(M,\calF) = \bigg\{ M, \{m\}: m\in M \bigg\}\cup \calF,
	\]
	ordered by reverse inclusion: for $A,B\in P(M,\calF)$ put
	$A\leq B$ iff $B\subseteq A$. 
	
	$P(M,\calF)$ thus has the set $M$ as a smallest element, singletons
	$\{m\}$ for $m\in M$ as maximal elements, and possibly some other 
	subsets of $M$ that are listed in $\calF$. We call posets of this form
	\emph{reduced} posets, and $M$ is the base of $P$. 
	If $\calF=\emptyset$, then we might simply write 
	$P(M)$ in place of $P(M,\emptyset)$. 
\end{definition}

\noindent Maximal elements of $P=P(M,\calF)$ are the elements of the set
$\M(P) = \big\{ \{m\}: m\in M \big\}$. Further, for any $a,b\in P$
we have $a\leq b$ iff $\M(a)\supseteq \M(b)$, in particular, 
$a=b$ iff $\M(a)=\M(b)$. 

Note that $\delta(\BB_m)$ is (isomorphic to) the reduced poset $P(M,\emptyset)$
with $|M|=m$.

\begin{lemma} \label{lem:mplus1}
	Let $m\geq 1$. For a finite poset $X$, the following are equivalent:
	\begin{enumerate}
		\item[(1)] $\varepsilon(X)\in \Pa_m$;
		\item[(2)] every $x\in X$ satisfies $|\M(x)|\le m$.
	\end{enumerate}
\end{lemma}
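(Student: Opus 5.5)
Since $\Pa_m$ is a variety, $\eps(X)\in\Pa_m$ holds iff $\eps(X)\in\mathbf{HSP}(\BB_m)$. I plan to work entirely on the dual side, using the identity $(\mathbf{ib}_m)$ for one direction and a direct pp-morphism construction for the other.

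\emph{(1)$\Rightarrow$(2).} Suppose some $x\in X$ has $|\M(x)|\ge m+1$, and pick distinct maximal elements $y_1,\dots,y_{m+1}$ above $x$. I will show that $(\mathbf{ib}_m)$ fails in $\eps(X)$. In $\eps(X)$ the elements are upsets, $\land$ is intersection, $\lor$ is union, and $U^*=\{z: {\uparrow}z\cap U=\emptyset\}$. Set $x_i=\{y_i\}$, which is an upset because $y_i$ is maximal. Then $\Land_{j\ne i}x_j^*$ is the set of points whose up-set avoids every $y_j$ with $j\ne i$. Hence $x_i\land\Land_{j\ne i}x_j^*=\{y_i\}$, since $y_i$ does not lie below any other maximal element. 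It follows that $(x_i\land\Land_{j\neq i}x_j^*)^*=\{y_i\}^*$, which is the set of points not below $y_i$. The point $x$ lies below every $y_i$, so $x$ belongs to none of these sets. Therefore the join omits $x$ and is not $1=X$, so the identity fails and $\eps(X)\notin\Pa_m$.

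\emph{(2)$\Rightarrow$(1).} Assume every $x\in X$ satisfies $|\M(x)|\le m$. For each $x$ choose a surjection $\sigma_x$ from the maximal elements of $\delta(\BB_m)\cong P(M)$, with $|M|=m$, onto $\M(x)$. This is possible because $1\le|\M(x)|\le m$; here the condition $m\ge1$ and the finiteness of $X$ guarantee that $\M(x)$ is non-empty. Define $h_x:\delta(\BB_m)\to X$ by sending the bottom element to $x$ and each maximal element $\{k\}$ to $\sigma_x(\{k\})$. The map $h_x$ is order preserving. To check the pp-condition, note that the maximal elements above the bottom map onto $\M(x)=\M(h_x(\text{bottom}))$, and each maximal point maps to a maximal point. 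Hence $h_x$ is a pp-morphism. Taking the disjoint union of copies of $\delta(\BB_m)$, one for each $x\in X$, gives a surjective pp-morphism $\biguplus_{x\in X}\delta(\BB_m)\twoheadrightarrow X$, because $x$ is hit by the bottom of its own copy. Lemma~\ref{1.7} then gives $\eps(X)\in\Q(\BB_m)\subseteq\Pa_m$.

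The main point to check is the upset computation in the first direction, especially that the relevant sets are upsets and that $*$ is computed correctly in $\eps(X)$. Everything else is routine.
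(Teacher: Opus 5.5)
Your proof is correct, and your direction (2)$\Rightarrow$(1) is the paper's construction verbatim. One small caveat: for $m=1$ the reduced poset $P(M)$ with $|M|=1$ is a single point, so the identification $\delta(\BB_m)\cong P(M)$ fails there. Read $\delta(\BB_m)$ instead as a bottom below $m$ distinct maxima, which is the two-element chain when $m=1$; your map then works as written.

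For (1)$\Rightarrow$(2) you take a genuinely different route. The paper uses $\Pa_m=\Q(\BB_m)$ and Lemma~\ref{1.7} to get a surjective pp-morphism from copies of $\delta(\BB_m)$ onto $X$, and then pushes the bound $|\M(\cdot)|\le m$ forward along the pp-condition. You instead refute $(\mathbf{ib}_m)$ directly in $\eps(X)$ with $x_i=\{y_i\}$. Your computations are right: $U^*=\{z:{\uparrow}z\cap U=\emptyset\}$, $x_i\land\Land_{j\ne i}x_j^*=\{y_i\}$, and $x$ lies in none of the sets $\{y_i\}^*$.

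The paper's argument is shorter and stays entirely on the dual side. However, it leans on the equality of the variety and the quasivariety generated by $\BB_m$, which the paper uses without proof. That equality follows from J\'onsson's lemma together with the fact that the subdirectly irreducibles in $\Pa_m$, namely $\BB_k$ for $k\le m$, embed in $\BB_m$.

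Your argument needs only that $(\mathbf{ib}_m)$ is valid in $\Pa_m$, not that it axiomatizes $\Pa_m$. Moreover, your (2)$\Rightarrow$(1) lands in $\Q(\BB_m)$. So your version of the lemma, combined with local finiteness of $\Pa$ (a locally finite quasivariety is generated by its finite members), recovers $\Pa_m=\Q(\BB_m)$ as a corollary instead of assuming it.
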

\begin{proof}
	$(1)\Rightarrow(2)$\ 
	Since $\Pa_m=\Q(\BB_m)=\Q(\eps(B_m))$, where $B_m$ is the
	poset with one bottom element and $m$ maximal elements, Lemma \ref{1.7} yields a
	surjective pp-morphism from a finite disjoint union of copies of $B_m$ onto $X$.
	Every point of $B_m$ has at most $m$ maximal elements above it, 
	so by preservation of maxima the same holds in $X$.

	$(2)\Rightarrow(1)$\ 
	For each $x\in X$, choose a surjection from the set of 
	maxima of $B_m$ onto $\M(x)$, and map the bottom of $B_m$ to $x$. 
	This gives a pp-morphism $f_x:B_m\to X$.
	Taking the disjoint union of all these maps yields a surjective pp-morphism
	$\biguplus_{x\in X} B_m \twoheadrightarrow X$.
	By Lemma \ref{1.7}, $\eps(X)\in \Q(\eps(B_m))=\Pa_m$.
\end{proof}

\begin{lemma}\label{lem:egykomponens}
	For a finite poset $X$, the following are equivalent:
	\begin{enumerate}
		\item[(1)] $\Pa_m\subseteq\Q(\eps(X))$;
		\item[(2)] there is a surjective pp-morphism $g: X\twoheadrightarrow \delta(\BB_m)$.
	\end{enumerate}
\end{lemma}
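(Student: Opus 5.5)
The argument is a direct application of Lemma \ref{1.7} together with the duality between finite subalgebras and surjective pp-morphisms. Since $\Pa_m=\Q(\BB_m)$, the inclusion $\Pa_m\subseteq\Q(\eps(X))$ is equivalent to $\BB_m\in\Q(\eps(X))$, and $\BB_m=\eps(\delta(\BB_m))$ is finite.

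For $(2)\Rightarrow(1)$: given a surjective pp-morphism $g:X\twoheadrightarrow\delta(\BB_m)$, apply Lemma \ref{1.7} with $\mathcal{P}=\{X\}$ and $n=1$. This gives $\BB_m\in\Q(\eps(X))$, and hence $\Pa_m=\Q(\BB_m)\subseteq\Q(\eps(X))$. Alternatively, one can use the duality directly: $\BB_m$ embeds into $\eps(X)$.

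For $(1)\Rightarrow(2)$: by Lemma \ref{1.7} we obtain a surjective pp-morphism
\[
h:X_1\uplus\cdots\uplus X_n\twoheadrightarrow\delta(\BB_m),
\]
where each $X_i$ is a copy of $X$. The task is to extract a single component that already maps onto $\delta(\BB_m)$. Let $b$ be the bottom element of $\delta(\BB_m)$, and pick $x$ in some copy $X_i$ with $h(x)=b$. Preservation of maxima gives $h[\M(x)]=\M(b)$, which is all $m$ maximal points. Since $\M(x)\subseteq X_i$, the restriction $h|_{X_i}$ hits $b$ and every maximal point, so it is surjective. The restriction of a pp-morphism to a component of a disjoint union is still order preserving, and it still satisfies $\M(h(y))=h[\M(y)]$ for $y\in X_i$, because $\M(y)$ computed in the disjoint union lies inside $X_i$. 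Identifying $X_i$ with $X$ then gives the required $g$.

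The only step needing care is this component-extraction argument. It works precisely because $\delta(\BB_m)$ has a least element lying below all of its maxima, so a single preimage of that bottom point forces surjectivity onto the maxima within one component. For $m=0$ the poset $\delta(\BB_0)$ is a single point, and any component meeting the image already works, so the statement holds for all $m$.
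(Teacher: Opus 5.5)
Your proof is correct and follows essentially the same approach as the paper: apply Lemma \ref{1.7}, and for $(1)\Rightarrow(2)$ restrict the pp-morphism to the copy of $X$ that contains a preimage of the bottom of $\delta(\BB_m)$. You also spell out two points the paper leaves implicit, namely why that restriction is surjective (preservation of maxima at the preimage of the bottom) and why it is still a pp-morphism.
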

\begin{proof}
	By Lemma \ref{1.7}, $\Pa_m\subseteq\Q(\eps(X))$ if and only if
	there is a surjective pp-morphism 
	$f:\biguplus_i X\twoheadrightarrow \delta(\BB_m)$
	from finitely many copies of $X$ onto $\delta(\BB_m)$. 
	From this $(2)\Rightarrow(1)$ is immediate. For $(1)\Rightarrow(2)$
	Write $\bot$ for the bottom element of $\delta(\BB_m)$, and let 
	$x\in \biguplus_i X$ such that $f(x)=\bot$. 
	Then $f$ restricted to the component to which $x$ belongs is already
	a surjective pp-morphism $g: X\twoheadrightarrow \delta(\BB_m)$.
\end{proof}

\begin{lemma}\label{lemma-1}
	Let $\A$ be a finite p-algebra such that $\A\notin\Pa_m$.
	Then there is a reduced poset $P=P(N,\mathcal F)$, $|N|=m+1$,
	such that $\varepsilon(P)\subseteq \A$.
\end{lemma}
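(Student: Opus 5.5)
Since $\A$ is finite, we work with its dual poset $X=\delta(\A)$, so that $\A\cong\eps(X)$. By Lemma \ref{lem:mplus1}, the assumption $\A\notin\Pa_m$ means that some $x_0\in X$ satisfies $|\M(x_0)|\ge m+1$. The duality recalled in the overview says that $\eps(P)\subseteq\A$ exactly when there is a surjective pp-morphism $X\twoheadrightarrow P$. So the plan is to build a reduced poset $P=P(N,\calF)$ with $|N|=m+1$, together with a surjective pp-morphism $h:X\twoheadrightarrow P$.

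To construct $h$, fix $m+1$ distinct maximal elements $u_0,\dots,u_m\in\M(x_0)$ and let $N=\{0,\dots,m\}$. Define a surjection $c:\M(X)\to N$ by $c(u_i)=i$, sending every other maximal element of $X$ to $0$ (any fixed value works). For $y\in X$ put
\[
	h(y)=c[\M(y)]\subseteq N ,
\]
which is non-empty because every element of a finite poset lies below some maximal element. Take $\calF$ to consist of those sets $h(y)$ that are neither $N$ nor singletons. Then every value of $h$ lies in $P(N,\calF)$, and $h$ hits every element of it:
\begin{itemize}
	\item $h(x_0)\supseteq\{0,\dots,m\}=N$, so $N$ is attained;
	\item $h(u_i)=\{i\}$, so every singleton is attained;
	\item every member of $\calF$ is a value of $h$ by definition.
\end{itemize}

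It remains to check that $h$ is a pp-morphism. For order preservation, if $y\le z$ then $\M(y)\supseteq\M(z)$, so $h(y)\supseteq h(z)$, which means $h(y)\le h(z)$ in the reverse-inclusion order. For the maximal-element condition, each maximal $u$ of $X$ has $h(u)=\{c(u)\}$, a singleton. In $P$ the maximal elements above $h(y)$ are exactly the singletons $\{n\}$ with $n\in h(y)$. On the other side,
\[
	h[\M(y)]=\bigl\{\{c(u)\}: u\in\M(y)\bigr\}=\bigl\{\{n\}: n\in h(y)\bigr\}.
\]
Hence $\M(h(y))=h[\M(y)]$, as required.

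Applying the duality to this surjective pp-morphism gives $\eps(P)\subseteq\eps(X)\cong\A$, which proves the lemma. The only delicate point is choosing $c$ so that it is injective on $\{u_0,\dots,u_m\}$; this is what forces $h(x_0)$ to be exactly $N$ and makes the base have size $m+1$. Once $c$ is fixed this way, every remaining step is a direct check.
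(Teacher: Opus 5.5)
Your proof is correct and is essentially the paper's argument. The paper also chooses a surjection $\pi\colon\M(R)\twoheadrightarrow N$ with $\pi[\M(x)]=N$, takes $P=\{\pi[\M(y)]: y\in R\}$ ordered by reverse inclusion, and checks that $q(y)=\pi[\M(y)]$ is a surjective pp-morphism; your map $c$ is just one explicit choice of such a $\pi$.
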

\begin{proof}
	Let $R = \delta(\A)$ be the dual poset of $\A$. 
	Since $\A\notin\Pa_m$, Lemma \ref{lem:mplus1} 
	gives a point $x\in R$ such that $|\M(x)|\geq m+1$.
	Choose a set $N$ with $|N|=m+1$, and choose a surjection
	\[
		\pi:\M(R)\twoheadrightarrow N
	\]
	such that $\pi[\M(x)]=N$. Define
	\[
		P= \big\{\pi[\M(y)]: y\in R\big\},
	\]
	ordered by reverse inclusion.
	Since $\pi$ is surjective, $P$ contains every singleton $\{n\}$,
	$n\in N$. Further, as $\pi[\M(x)]=N$, $P$ 
	also contains $N$. Thus $P$ is a reduced poset with base $N$.
	Define
	\[
		q:R\to P,\qquad q(y)=\pi[\M(y)].
	\]
	The map $q$ is surjective by definition. If $y\leq z$, then
	$\M(z)\subseteq \M(y)$, and therefore
	$q(z)\subseteq q(y)$, so $q$ is order-preserving (the ordering of $P$ 
	is reverse inclusion).
	Moreover, 
	\[
		\M(q(y)) = \bigl\{\{n\}:n\in\pi[\M(y)]\bigr\},
	\]
	while
	\[
		q[\M(y)]	= \bigl\{\{\pi(u)\}:u\in \M(y)\bigr\}.
	\]
	Therefore $\M(q(y)) = q[\M(y)]$, and hence $q$ is a surjective pp-morphism.

	By duality, $\varepsilon(P)\subseteq\varepsilon(R)=\A$.	
\end{proof}


\begin{lemma}\label{lem:relative-comparison}
	Let $m\geq 1$, let $M$ and $N$ be sets of cardinality $m+1$, and 
	let $P(M,\mathcal F)$ and $P(N,\mathcal G)$ be reduced posets.
	The following conditions are equivalent:
	\begin{enumerate}
		\item[(1)] $\Q\bigl(\BB_m,\varepsilon(P(N,\mathcal G))\bigr)$
		$\subseteq$ $\Q\bigl(\BB_m,\varepsilon(P(M,\mathcal F))\bigr)$ 
		\item[(2)] There is a bijection $\sigma:M\to N$ such that
		\[
			\sigma[\mathcal F] \subseteq \mathcal G \cup\{N\}
			\cup\bigl\{\{b\}:b\in N\bigr\}.
		\]
		Equivalently, $\sigma[A]\in P(N,\mathcal G)$ for every 
		$A\in\mathcal F$.
	\end{enumerate}
\end{lemma}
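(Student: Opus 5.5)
Write $P=P(M,\mathcal F)$ and $P'=P(N,\mathcal G)$. By Lemma \ref{1.7}, condition (1) holds iff $\varepsilon(P')\in\Q(\BB_m,\varepsilon(P))$. This in turn holds iff there is a surjective pp-morphism
\[
	h:\; \delta(\BB_m)\uplus\cdots\uplus\delta(\BB_m)\uplus P\uplus\cdots\uplus P\twoheadrightarrow P'
\]
from finitely many copies of $\delta(\BB_m)$ and of $P$. Note that $\BB_m$ itself belongs to both quasivarieties, so the $\BB_m$ part of the inclusion is automatic.

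For $(2)\Rightarrow(1)$ we build such an $h$ explicitly. Take one copy of $P$ and map it by $A\mapsto\sigma[A]$. This is well defined by (2) together with the facts $\sigma[M]=N$ and $\sigma[\{m\}]=\{\sigma(m)\}$. It is order preserving, since $\sigma$ is a bijection and so respects reverse inclusion. It is a pp-morphism: in a reduced poset $\M(A)=\{\{a\}:a\in A\}$, and $\sigma$ carries this set onto $\M(\sigma[A])$. Its image already contains $N$ and all singletons. Every remaining element $G\in\mathcal G$ has $|\M(G)|=|G|\le m+1$, and $G\neq N$ forces $|G|\le m$. So, as in the proof of Lemma \ref{lem:mplus1}, we can cover $G$ by a copy of $\delta(\BB_m)$: send the bottom to $G$ and the $m$ maxima onto the singletons of $G$. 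Taking the disjoint union of all these maps gives a surjective pp-morphism, and Lemma \ref{1.7} yields (1).

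For $(1)\Rightarrow(2)$, take $h$ as above and pick $x$ with $h(x)=N$. Since $|\M(N)|=m+1$ and pp-morphisms map $\M(x)$ onto $\M(h(x))$, we need $|\M(x)|\ge m+1$. Points of $\delta(\BB_m)$ have at most $m$ maxima above them, so $x$ lies in a copy of $P$. Its only point with $m+1$ maxima is the bottom $M$, so $x=M$ in that copy. Let $\sigma:M\to N$ be the restriction of $h$ to maxima, $\{a\}\mapsto h(\{a\})$. Because $h[\M(M)]=\M(N)$ and both sets have size $m+1$, $\sigma$ is a surjection between equal finite sets, hence a bijection. Now let $A\in\mathcal F$ lie in this copy. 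Then $\M(h(A))=h[\M(A)]=\{\{\sigma(a)\}:a\in A\}$. Elements of a reduced poset are determined by their sets of maxima, so $h(A)=\sigma[A]$, and $\sigma[A]$ is an element of $P'$. That is exactly (2).

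The main obstacle is the step in $(1)\Rightarrow(2)$ that identifies the preimage of $N$. It requires both that $\delta(\BB_m)$ cannot reach $N$ and that inside $P$ only the bottom can. Both follow from counting maxima, so the argument should go through without difficulty.
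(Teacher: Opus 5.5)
Your proof is correct and follows the paper's argument essentially step for step. In $(1)\Rightarrow(2)$ you locate the preimage of the bottom $N$ by counting maxima, read off $\sigma$ from the action on maximal points, and get $h(A)=\sigma[A]$ because points of a reduced poset are determined by their maxima; in $(2)\Rightarrow(1)$ you map one copy of $P(M,\mathcal F)$ by $A\mapsto\sigma[A]$ and cover the leftover elements of $\mathcal G$ (which have at most $m$ maxima) by copies of $\delta(\BB_m)$, exactly as the paper does.
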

\begin{proof}
	Write $B_m$ for the dual poset of $\BB_m$, that is, $B_m$
	is the poset consisting of one bottom element and $m$ 
	distinct maximal elements.

	(1)$\Rightarrow$(2) Assume  (1), that is,  
	$\varepsilon(P(N,\mathcal G))\in \Q\bigl(\BB_m,
	\varepsilon(P(M,\mathcal F))\bigr)$.
	By Lemma \ref{1.7} there is a surjective pp-morphism
	\[
		h: \biguplus_{i}P_i(M,\mathcal F) \uplus \biguplus_{j}B_m^j
		\ \twoheadrightarrow \ P(N, \mathcal G),
	\]
	where every $P_i(M,\mathcal F)$ is a copy of 
	$P(M,\mathcal F)$ and every $B_m^j$ is a copy 
	of $B_m$. Since $h$ is surjective, some element of the 
	disjoint union must map to the bottom element $N$
	of $P(N,\mathcal{G})$. Let this element be $x$. 
	The bottom point $N$ of $P(N,\mathcal G)$ has exactly $m+1$ maximal 
	points above it. Since $h$ is a pp-morphism,
	$h[\M(x)]=\M(h(x))=\M(N)$,	and therefore $x$ has at least $m+1$ 
	maximal points above it. No point of a copy of $B_m$ has more than 
	$m$ maximal points above it, moreover, in a copy of $P(M,\mathcal F)$ 
	the only point having $m+1$ maximal points above it is the 
	bottom point $M$. Consequently, $x$ is the bottom point of some copy
	of $P(M,\mathcal F)$. Restricting $h$ to this copy gives a pp-morphism
	\[ f:P(M,\mathcal F)\to P(N,\mathcal G)
	\qquad\text{such that}\qquad f(M)=N. \]
	By the pp-property, maximal elements are mapped onto maximal elements, 
	hence there is a map $\sigma:M\to N$ such that
	\[
		f(\{a\})=\{\sigma(a)\} \qquad(a\in M).
	\]
	Applying the pp-property again, we obtain
	\[ \bigl\{\{b\}:b\in N\bigr\} =\M(N)= \M(f(M)) =f[\M(M)]
	=\bigl\{\{\sigma(a)\}:a\in M\bigr\}.\]
	Thus $\sigma:M\to N$ is surjective. Since $|M|=|N|=m+1$ is finite,
	$\sigma$ is a bijection. For any $A\in \mathcal{F}$, the 
	pp-property gives
	\[
		\M(f(A))=f[\M(A)]=f\bigl[\{\{a\}:a\in A\}\bigr]
		=\bigl\{\{\sigma(a)\}:a\in A\bigr\}.
	\]
	Points of reduced posets are uniquely determined by the
	set of maximal points above them. Hence, 
	$f(A)=\sigma[A]$. In particular, $\sigma[A]\in P(N,\mathcal G)$.\\
	
	(2)$\Rightarrow$(1) Suppose that there is a bijection 
	$\sigma:M\to N$ such that
	\[
		\sigma[A]\in P(N,\mathcal G)\qquad\text{for every }A\in\mathcal F.
	\]
	Define
	\[
		f:P(M,\mathcal F)\to P(N,\mathcal G),
		\qquad f(X)=\sigma[X].
	\]
	This $f$ is well defined: Indeed,
	\[ f(M)=N, \qquad f(\{a\})=\{\sigma(a)\}, \]
	and the assumption (2) guarantees that $f(A)\in P(N,\mathcal G)$ 
	for every $A\in\mathcal F$.

	The map $f$ is order-preserving, because direct images 
	preserve inclusion. Moreover, for every $X\in P(M,\mathcal F)$ we have
	\[
		f[\M(X)]=f\bigl[\{\{a\}:a\in X\}\bigr]=
		\bigl\{\{\sigma(a)\}:a\in X\bigr\} = \M(\sigma[X])
   	 	=\M(f(X)).
	\]
	Hence, $f$ is a pp-morphism. The image of $f$ 
	contains the bottom point $N$ and all maximal points of $P(N,\mathcal G)$.
	
	Take any $Y\in P(N,\mathcal G)\setminus f[P(M,\mathcal F)]$ (if exists).
	Then $Y$ is neither $N$ nor a singleton. Consequently, 
	$2\leq |Y|\leq m$. Let $B$ be the $m$-element base of $B_m$, and 
	choose a surjection $\tau_Y:B\twoheadrightarrow Y$. Define
	\[
		g_Y:B_m\to P(N,\mathcal G), \qquad g_Y(B) = Y, \quad
		g_Y(\{a\})=\{\tau_Y(a)\} \quad (a\in B)
	\]
	The map $g_Y$ is order-preserving. Moreover, $g_Y$ is a pp-morphism:
	it is automatic at the maximal points of $B_m$, and calculations
	show
	\begin{align*}
		g_Y[\M(B)] = \bigl\{\{\tau_Y(a)\}:a\in B\bigr\}
   	 			  = \bigl\{\{y\}:y\in Y\bigr\}
				  = \M(Y)
				  =\M(g_Y(B)).
	\end{align*}
	Thus $g_Y$ is a pp-morphism whose image contains $Y$.

	Taking the disjoint union of $f$ and all the maps $g_Y$, for
	$Y\in P(N,\mathcal G)\setminus f[P(M,\mathcal F)]$, 
	gives a surjective pp-morphism
	\[
		P(M,\mathcal F)\uplus
		\biguplus_{Y}B_m \twoheadrightarrow P(N,\mathcal G).
	\]
	By Lemma \ref{1.7}, $\varepsilon(P(N,\mathcal G))\in
	\Q\bigl(\BB_m,\varepsilon(P(M,\mathcal F))\bigr)$.
\end{proof}

\section{Subvariety covers of $\Pa_m$}

\def\D{\mathbf{D}}
\begin{lemma}\label{vanveges}
	Let $\K$ be a quasivariety of p-algebras such that
	$\Pa_m\subsetneq \K$. Then there is a finite p-algebra $\D$
	such that $\Pa_m\subsetneq\Q(\D)\subseteq\K$.
\end{lemma}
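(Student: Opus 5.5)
Since $\Pa_m\subsetneq\K$, we first pick an algebra $\A\in\K\setminus\Pa_m$. The aim is to replace $\A$ by a finite algebra that still lies in $\K$, still lies outside $\Pa_m$, and whose generated quasivariety contains $\Pa_m$.

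\textbf{Step 1: reduce to a finite algebra.} Because $\Pa_m$ is a variety and $\A\notin\Pa_m$, some identity defining $\Pa_m$ fails in $\A$. For $m\ge 1$ this is $(\mathbf{ib}_m)$; in general it is an identity in finitely many variables. Choose elements $a_1,\dots,a_k\in\A$ witnessing the failure and let $\A_0=\Sg^{\A}(a_1,\dots,a_k)$. By local finiteness of $\Pa$ (\cite{BermanDwinger}), $\A_0$ is finite. It lies in $\K$, since quasivarieties are closed under subalgebras. It is not in $\Pa_m$, since the same elements witness the failure of the identity in $\A_0$.

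\textbf{Step 2: make the generated quasivariety contain $\Pa_m$.} Put $\D=\A_0\times\BB_m$. This is finite, and $\BB_m\in\Pa_m\subseteq\K$, so $\D\in\K$ because $\K$ is closed under products. Hence $\Q(\D)\subseteq\K$.

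Next, $\BB_m$ is a homomorphic image of $\D$ via the projection. Projections are not available in an arbitrary quasivariety, so we argue dually. The dual poset of $\D$ is $\delta(\A_0)\uplus\delta(\BB_m)$, and the identity map on the second summand extends to a pp-morphism onto $\delta(\BB_m)$ once each point of $\delta(\A_0)$ is sent somewhere suitable. Simpler is to use Lemma \ref{1.7} directly with the family $\{\delta(\A_0),\delta(\BB_m)\}$. Then $\Q(\A_0,\BB_m)$ contains $\BB_m$, so it contains $\Q(\BB_m)=\Pa_m$. Since $\Q(\A_0\times\BB_m)=\Q(\A_0,\BB_m)$, because both factors embed in the product as nontrivial algebras, we may equally take $\Q(\D)$ to mean $\Q(\A_0,\BB_m)$, which is generated by the single finite algebra $\A_0\times\BB_m$.

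\textbf{Step 3: strictness.} We have $\A_0\in\Q(\D)$ but $\A_0\notin\Pa_m$, so $\Pa_m\subsetneq\Q(\D)$.

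\textbf{Main obstacle.} The delicate point is the identity $\Q(\A_0\times\BB_m)=\Q(\A_0,\BB_m)$, which needs each factor to embed into the product. For p-algebras, a factor $\mathbf{C}$ embeds into $\mathbf{C}\times\mathbf{E}$ when $\mathbf{E}$ has a one-element subalgebra, and p-algebras do not. We therefore argue dually instead. By Lemma \ref{1.7}, $\mathbf{C}\in\Q(\mathbf{C}\times\mathbf{E})$ holds iff there is a surjective pp-morphism from copies of $\delta(\mathbf{C})\uplus\delta(\mathbf{E})$ onto $\delta(\mathbf{C})$. Take the identity on $\delta(\mathbf{C})$, and send $\delta(\mathbf{E})$ into $\delta(\mathbf{C})$ by some pp-morphism.

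For $\mathbf{E}=\BB_m$, such a pp-morphism exists: map the bottom of $B_m$ to a point $x$ with $1\le|\M(x)|\le m$ (for instance a maximal point), and map the maxima onto $\M(x)$. For $\mathbf{C}=\BB_m$ and $\mathbf{E}=\A_0$, we need a pp-morphism $\delta(\A_0)\to B_m$. Any map sending every point to one maximal point of $B_m$ works.

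Alternatively, the cleanest route avoids products altogether: apply Lemma \ref{1.7} to the family $\{\delta(\A_0),B_m\}$, and note that a finitely generated quasivariety of a locally finite variety, generated by finitely many finite algebras, is generated by their product. The argument above supplies exactly this.
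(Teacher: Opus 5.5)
Your proof is correct. It has the same skeleton as the paper's: a finite subalgebra $\A_0$ witnessing the failure of an identity of $\Pa_m$, the product $\mathbf{D}=\A_0\times\BB_m$, and the equality $\Q(\A_0\times\BB_m)=\Q(\A_0,\BB_m)$.

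The difference is in how each factor is placed in the quasivariety generated by the product. The paper stays algebraic. By Glivenko's theorem every nontrivial p-algebra maps homomorphically onto $\BB_0$, and $\BB_0$ is a subalgebra of every nontrivial p-algebra. So there is a homomorphism $h:\mathbf{C}\to\mathbf{E}$, and $c\mapsto(c,h(c))$ embeds $\mathbf{C}$ into $\mathbf{C}\times\mathbf{E}$. The lack of one-element subalgebras, which you name as the obstacle, therefore does not block an algebraic embedding: the graph of a homomorphism replaces the missing constant.

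Your pp-morphism (identity on $\delta(\mathbf{C})$, constant map of $\delta(\mathbf{E})$ to a maximal point of $\delta(\mathbf{C})$) is exactly the Priestley dual of that graph embedding. The constant map corresponds to the composite $\mathbf{C}\to\BB_0\hookrightarrow\mathbf{E}$.

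What each version buys:
\begin{itemize}
\item The paper's argument gives $\Q(\A,\B)=\Q(\A\times\B)$ for arbitrary nontrivial p-algebras, finite or not.
\item Yours is confined to finite algebras, which is all that is needed here, but it avoids Glivenko's theorem and uses only Lemma \ref{1.7}.
\item Your Step 1 is also slightly more general, since it uses whichever identity of $\Pa_m$ fails rather than $\mathbf{ib}_m$.
\end{itemize}

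One correction concerns your last paragraph. It is not true in general that, in a locally finite variety, the quasivariety generated by finitely many finite algebras is generated by their product. For bounded lattices, $M_3\notin\Q(M_3\times\mathbf{2})$. Since $M_3\times\mathbf{2}$ is finite, membership would mean an embedding of $M_3$ into a nonempty power of $M_3\times\mathbf{2}$. That would yield a $\{0,1\}$-lattice homomorphism $M_3\to\mathbf{2}$, which cannot exist because $M_3$ is simple. The equality requires homomorphisms between the factors. That is exactly what your constant pp-morphisms, or Claim 1 in the paper, supply for p-algebras, so your main argument stands.
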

\begin{proof}
	Let $\A, \B\in\Pa$ be arbitrary non-trivial p-algebras.
	
	\noindent Claim 1: There are homomorphisms $h:\A\to\B$ and $k:\B\to \A$. To see this
	consider the two-element $\BB_0\in\Pa$ (which is the two-element Boolean algebra). 
	By Glivenko's theorem (see \cite{beazer1992some}) there is a homomorphism
	from $\A$ into a non-trivial Boolean algebra, and from any non-trivial 
	Boolean algebra there is a 
	homomorphism into the two-element Boolean algebra. Composition of homomorphisms 
	gives a 
	homomorphism $\A\to\BB_0$. Also, $\BB_0$ is a subalgebra of $\B$, therefore 
	the composition 
	$\A\to\BB_0\hookrightarrow\B$ gives $h:\A\to\B$. The other case is analogous. \\
	
	\noindent Claim 2: $\Q(\A,\B) = \Q(\A\times\B)$. That $\A\times\B\in \Q(\A,\B)$ 
	and thus
	$\Q(\A\times\B)\subseteq \Q(\A,\B)$ is straightforward. For the converse inclusion
	define
	$f:\A\to\A\times \B$ by $f(a) = (a, h(a))$. Then $f$ is a homomorphism and as it is
	the identity 
	on the first coordinate, $f$ is injective. Therefore, 
	$\A\in \mathbf{IS}(\A\times \B)$. 
	Similarly, $\B\in\mathbf{IS}(\A\times\B)$. Consequently, 
	$\Q(\A, \B)\subseteq \Q(\A\times\B)$.\\
	
	To complete the proof, let $\K=\Q(\{\A_i: i\in I\})\supsetneq\Pa_m$.
	Then there is 
	$\A_i\notin \Pa_m$. As $\Pa_m$ is axiomatized by a single 
	identity $\mathbf{ib}_m$, 
	it follows that $\A_i\notmodels \mathbf{ib}_m$, and there are finitely 
	many elements
	$a_1, \ldots, a_n\in\A_i$ such that in the generated subalgebra 
	$\A=\Sg^{\A_i}(\{a_1, \ldots, a_n\})$ we also have $\A\notmodels\mathbf{ib}_m$,
	that is, $\A\notin\Pa_m$. The class $\Pa$ is locally finite, ensuring $\A$ 
	is finite. As $\Pa_m$ is generated by $\BB_m$, it follows that
	\[
		\Pa_m = \Q(\BB_m)\subsetneq \Q(\BB_m, \A)\subseteq \K\,.
	\]
	By claim 2 above, $\Q(\BB_m, \A) = \Q(\BB_m\times \A)$. 
	But $\BB_m\times \A$ is finite, thus putting 
	$\D=\BB_m\times\A$ concludes the proof.
\end{proof}


\begin{definition}
	For finite $m$ let $U_m$ be the poset
	\[
		U_m = \bigl(\wp(m)\setminus\{\emptyset\},\supseteq\bigr)
		\ =\ 
		P\bigl(m,\wp(m)\setminus\{\emptyset\}\bigr)\,.
	\]	
	Denote the dual algebra of $U_m$ by $\U_m$, that is, 
	$\U_m = \eps(U_m)$.	The posets $U_m$ for $m=1, \ldots, 4$ are 
	illustrated in Figure \ref{fig:um}.
\end{definition}

\begin{figure}[ht!]
    \begin{center}
		\begin{tikzpicture}[
		    x=1cm,
		    y=1cm,
		    dot/.style={circle,fill,inner sep=1.6pt}
		]

		\begin{scope}[xshift=0cm,yshift=0cm]
		    \node[dot] (u1-1) at (0,0) {};
			\draw (0,-0.7) node {$U_1$};
		\end{scope}

		\begin{scope}[xshift=3.5cm,yshift=0cm]
		    \node[dot] (u2-1)  at (-0.6,0.8) {};
		    \node[dot] (u2-2)  at ( 0.6,0.8) {};
		    \node[dot] (u2-12) at (0,0) {};

		    \draw (u2-12) -- (u2-1);
		    \draw (u2-12) -- (u2-2);
			
			\draw (0,-0.7) node {$U_2$};
		\end{scope}

		\begin{scope}[xshift=8cm,yshift=0cm]
		    \node[dot] (u3-1) at (-1.2,1.6) {};
		    \node[dot] (u3-2) at ( 0,1.6) {};
		    \node[dot] (u3-3) at ( 1.2,1.6) {};

		    \node[dot] (u3-12) at (-1.2,0.7) {};
		    \node[dot] (u3-13) at ( 0,0.7) {};
		    \node[dot] (u3-23) at ( 1.2,0.7) {};

		    \node[dot] (u3-123) at (0,-0.2) {};

		    \draw (u3-12) -- (u3-1);
		    \draw (u3-12) -- (u3-2);
		    \draw (u3-13) -- (u3-1);
		    \draw (u3-13) -- (u3-3);
		    \draw (u3-23) -- (u3-2);
		    \draw (u3-23) -- (u3-3);

		    \draw (u3-123) -- (u3-12);
		    \draw (u3-123) -- (u3-13);
		    \draw (u3-123) -- (u3-23);
			
			\draw (0,-0.7) node {$U_3$};
			
		\end{scope}

		\begin{scope}[xshift=4.8cm,yshift=-5.2cm]
		    \node[dot] (u4-1) at (-3,3) {};
		    \node[dot] (u4-2) at (-1,3) {};
		    \node[dot] (u4-3) at ( 1,3) {};
		    \node[dot] (u4-4) at ( 3,3) {};

		    \node[dot] (u4-12) at (-3.0,1.9) {};
		    \node[dot] (u4-13) at (-1.8,1.9) {};
		    \node[dot] (u4-14) at (-0.6,1.9) {};
		    \node[dot] (u4-23) at ( 0.6,1.9) {};
		    \node[dot] (u4-24) at ( 1.8,1.9) {};
		    \node[dot] (u4-34) at ( 3.0,1.9) {};

		    \node[dot] (u4-123) at (-2.25,0.8) {};
		    \node[dot] (u4-124) at (-0.75,0.8) {};
		    \node[dot] (u4-134) at ( 0.75,0.8) {};
		    \node[dot] (u4-234) at ( 2.25,0.8) {};

		    \node[dot] (u4-1234) at (0,-0.3) {};

		    \draw (u4-12) -- (u4-1);
		    \draw (u4-12) -- (u4-2);

		    \draw (u4-13) -- (u4-1);
		    \draw (u4-13) -- (u4-3);

		    \draw (u4-14) -- (u4-1);
		    \draw (u4-14) -- (u4-4);

		    \draw (u4-23) -- (u4-2);
		    \draw (u4-23) -- (u4-3);

		    \draw (u4-24) -- (u4-2);
		    \draw (u4-24) -- (u4-4);

		    \draw (u4-34) -- (u4-3);
		    \draw (u4-34) -- (u4-4);

		    \draw (u4-123) -- (u4-12);
		    \draw (u4-123) -- (u4-13);
		    \draw (u4-123) -- (u4-23);

		    \draw (u4-124) -- (u4-12);
		    \draw (u4-124) -- (u4-14);
		    \draw (u4-124) -- (u4-24);

		    \draw (u4-134) -- (u4-13);
		    \draw (u4-134) -- (u4-14);
		    \draw (u4-134) -- (u4-34);

		    \draw (u4-234) -- (u4-23);
		    \draw (u4-234) -- (u4-24);
		    \draw (u4-234) -- (u4-34);

		    \draw (u4-1234) -- (u4-123);
		    \draw (u4-1234) -- (u4-124);
		    \draw (u4-1234) -- (u4-134);
		    \draw (u4-1234) -- (u4-234);
			
			\draw (0,-0.7) node {$U_4$};
			
		\end{scope}

		\end{tikzpicture}

		\caption{The posets $U_m$ for $m=1, \ldots, 4$.}
		\label{fig:um}
	\end{center}
\end{figure}

\begin{theorem}\label{thm:unique-cover}
	For every $m\geq 1$, the variety $\Pa_m$ has exactly one cover in the
	lattice of quasivarieties of p-algebras, and this cover is
	\[ \Pa_m \prec \Q\bigl(\BB_m, \U_{m+1} \bigr)\,. \]
\end{theorem}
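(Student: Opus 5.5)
The plan is to show two things. First, $\Q(\BB_m,\U_{m+1})$ is contained in every quasivariety $\K$ with $\Pa_m\subsetneq\K$. Second, $\Q(\BB_m,\U_{m+1})$ properly contains $\Pa_m$. Together these make $\Q(\BB_m,\U_{m+1})$ the least element of the set of quasivarieties strictly above $\Pa_m$, which is exactly the statement that it is the unique cover.

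\emph{Strict inclusion.} The bottom point of $U_{m+1}$ has $m+1$ maximal points above it. By Lemma \ref{lem:mplus1}, $\U_{m+1}\notin\Pa_m$. On the other hand $\Pa_m=\Q(\BB_m)$, so $\Pa_m\subsetneq\Q(\BB_m,\U_{m+1})$.

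\emph{Minimality.} Let $\K$ be a quasivariety with $\Pa_m\subsetneq\K$.
\begin{enumerate}
\item By Lemma \ref{vanveges} there is a finite $\D$ with $\Pa_m\subsetneq\Q(\D)\subseteq\K$. Since $\D\notin\Pa_m$, Lemma \ref{lemma-1} gives a reduced poset $P=P(N,\calG)$ with $|N|=m+1$ and $\eps(P)\subseteq\D$. Hence $\eps(P)\in\K$.
\item Also $\BB_m\in\Pa_m\subseteq\K$. Therefore $\Q(\BB_m,\eps(P(N,\calG)))\subseteq\K$.
\item It remains to show $\Q(\BB_m,\U_{m+1})\subseteq\Q(\BB_m,\eps(P(N,\calG)))$. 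Write $U_{m+1}=P(M,\calF)$ with $|M|=m+1$. Apply Lemma \ref{lem:relative-comparison} with the roles chosen so that $P(N,\calG)$ is the domain side and $U_{m+1}$ is the target side. Condition (2) then asks for a bijection $\sigma:N\to M$ with $\sigma[A]\in U_{m+1}$ for every $A\in\calG$.
\item Every nonempty subset of $M$ is a point of $U_{m+1}$, so any bijection $\sigma$ works. This gives $\eps(U_{m+1})\in\Q(\BB_m,\eps(P))$, hence $\Q(\BB_m,\U_{m+1})\subseteq\K$.
\end{enumerate}

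The step needing the most care is the direction in the application of Lemma \ref{lem:relative-comparison}. Its (1) says the quasivariety generated with $P(N,\calG)$ is contained in the one generated with $P(M,\calF)$, and its (2) requires images of the $\calF$-sets of the \emph{larger} side to land in the smaller poset. Here we want $U_{m+1}$ on the smaller side. So we instantiate the lemma with $\calF:=\calG$ (the poset from Lemma \ref{lemma-1}) and $\calG:=\wp(m+1)\setminus\{\emptyset\}$. Condition (2) then requires $\sigma[A]\in U_{m+1}$ for $A\in\calG$, which is trivially true because $U_{m+1}$ contains every nonempty subset of its base. This is why the full power set yields a least cover.

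Uniqueness follows: any cover $\K$ of $\Pa_m$ contains $\Q(\BB_m,\U_{m+1})$, which lies strictly above $\Pa_m$. By the covering property, $\K=\Q(\BB_m,\U_{m+1})$.
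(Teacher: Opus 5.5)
Your proposal is correct and follows the paper's proof essentially step for step. You use Lemma \ref{vanveges} to get a finite $\mathbf{D}$, then Lemma \ref{lemma-1} to extract a reduced poset $P(N,\calG)$ with $|N|=m+1$, and finally Lemma \ref{lem:relative-comparison}, instantiated in the correct direction, with an arbitrary bijection $N\to M$, which works because $U_{m+1}$ contains every nonempty subset of its base; reusing the letters $\calF,\calG$ in step 3 is only a cosmetic notational clash.
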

\begin{proof}
	The inclusion $\Pa_m \subseteq \Q\bigl(\BB_m, \U_{m+1} \bigr)$ is clear
	because $\Pa_m=\Q(\BB_m)$. The bottom point of $U_{m+1}$ has 
	$m+1$ maximal points above it, hence Lemma \ref{lem:mplus1} gives
	$\U_{m+1}\notin \Pa_m$, and thus the inclusion
	$\Pa_m \subsetneq \Q\bigl(\BB_m, \U_{m+1} \bigr)$ is proper.
	We show that $\Q\bigl(\BB_m, \U_{m+1} \bigr)$ is contained in 
	every quasivariety properly containing $\Pa_m$. 
	
	Let $\K$ be a quasivariety such that
	$\Pa_m\subsetneq\K$. By Lemma \ref{vanveges}, 
	there is a finite $\D$ such that
	\[
		\Pa_m\subsetneq\Q(\D)\subseteq\K.
	\]
	Apply Lemma \ref{lemma-1} to $\D$: There are a set $N$ 
	of cardinality $m+1$ and a family $\mathcal G$ of nonempty subsets of $N$ 
	such that, for $\eps(P(N,\calG))\subseteq\D$. As the element $N$ of $P(N,\calG)$
	has $m+1$ maximal elements above it, by Lemma \ref{lem:mplus1}, 
	$\eps(P(N,\calG))\notin \Pa_m$. Therefore
	\[
		\Pa_m\subsetneq \Q(\BB_m, \eps(P(N,\calG)))\subseteq \Q(\D)\subseteq \K\,.
	\]
	
	Write $U_{m+1} = P(M, \wp(M)\setminus\{\emptyset\})$ for some set $|M|=m+1$. 
	Choose any bijection $\sigma:N\to M$.
	For every $A\in\mathcal G$, the set $A$ is nonempty, and hence
	$\sigma[A]$ is a nonempty subset of $M$. Therefore
	\[
		\sigma[A]\in U_{m+1} \qquad\text{for every }A\in\mathcal G.
	\]
	Lemma \ref{lem:relative-comparison} gives
	$\Q(\BB_m, \U_{m+1})\ \subseteq \ \Q(\BB_m, \eps(P(N,\calG)))$,
	and therefore
	\[
		\Pa_m\subsetneq \Q(\BB_m, \U_{m+1}) \subseteq
		\Q(\BB_m, \eps(P(N,\calG)))\subseteq \Q(\D)\subseteq \K
	\]
	as desired.
\end{proof}

\begin{remark}
	Note that $\delta(\BB_0) = U_1$ and $\delta(\BB_2)=U_2$. It follows that
	\[
	\Q(\BB_0, \U_1) = \Q(\BB_0) = \Pa_0,\quad
	\text{ and }\quad
	\Q(\BB_1, \U_2) = \Q(\BB_1,\BB_2) = \Pa_2
	\]
	It is known that the interval $[\Pa_{-1}, \Pa_2]$ is a $4$-element chain, 
	see \cite{KowSlom}. Thus, $\Pa_2$ is the cover of $\Pa_1$ in Theorem 
	\ref{thm:unique-cover}.
\end{remark}

\begin{figure}[ht!]
    \begin{center}
		\begin{tikzpicture}[
		    x=1cm,
		    y=1cm,
		    dot/.style={circle,fill,inner sep=1.6pt}
		]

		\begin{scope}[xshift=3.5cm,yshift=0cm]
			\node[dot] (minus1) at (0,0) {}; 
			\node[dot] (nulla) at (0,1) {}; %
			\node[dot] (egy) at (0,2) {}; %
			\node[dot] (ketto) at (0,3) {};%
			\node[dot] (Q1) at (0,4) {}; %
			\node[dot] (harom) at (0,5) {}; %
			\node[dot] (Q2) at (0,6) {}; %

		    \draw (minus1) -- (nulla) -- (egy) -- (ketto) -- (Q1);
			\draw[dashed] (Q1) to[out=30, in=-30] (harom);
			\draw[dashed] (Q1) to[out=150, in=-150] (harom);
			\draw (harom) -- (Q2);
			
			\draw (minus1) node[right] {$\Pa_{-1} = \mathsf{triv}$};
			\draw (nulla) node[right] {$\Pa_0 = \mathsf{BA} = \Q(\BB_0, \U_1)$};
			\draw (egy) node[right] {$\Pa_1 = \Q(\BB_1)$};
			\draw (ketto) node[right] {$\Pa_2 = \Q(\BB_1, \U_2) = \Q(\BB_2)$}; 
			\draw (Q1) node[right] {$\Q(\BB_2, \U_3)$};
			\draw (harom) node[right] {$\Pa_3 = \Q(\BB_3)$};
			\draw (Q2) node[right] {$\Q(\BB_3, \U_4)$};
			
			\draw (Q2) node[above] {$\vdots$};
		\end{scope}
		
		\end{tikzpicture}
		
		\caption{The bottom of the quasivariety lattice of p-algebras. Solid lines represent covers.}
		\label{fig:bottom}
	\end{center}
\end{figure}

\section{The case $m=2$}\label{mketto}

The recent paper \cite{KowSlom} gives the $3$ finite posets depicted in Figure \ref{fig:3poset} below and claims that any of these three posets generate a cover of $\Pa_2$.
This claim appears incorrect and is in contradiction with Theorem \ref{thm:unique-cover}, according to which the unique cover of $\Pa_2$ is $\Q(\BB_2, \U_3)$. To resolve this conflict, let us recheck the case $m=2$.

    \begin{figure}[!ht]
    \begin{center}
        \begin{tikzpicture}[thick,scale=0.95]
			\node[label=above:{\scriptsize $\{1\}$}] (A1) at (0,0) {};
			\node[label=above:{\scriptsize $\{2\}$}] (B1) at (1,0) {};
			\node[label=above:{\scriptsize $\{3\}$}] (C1) at (2,0) {};

			\node[label=left:{\scriptsize $\{1,2\}$}] (D1) at (0.5,-0.7) {};
			\node[label=below:{\scriptsize $\{1,2,3\}$}] (E1) at (1,-1.4) {};

			\draw [fill] (A1) circle [radius=.06];
			\draw [fill] (B1) circle [radius=.06];
			\draw [fill] (C1) circle [radius=.06];
			\draw [fill] (D1) circle [radius=.06];
			\draw [fill] (E1) circle [radius=.06];
			
			\draw (E1.center) -- (D1.center) -- (A1.center);
			\draw (D1.center) -- (B1.center);
			\draw (E1.center) -- (C1.center);
			
			\draw (1,-2.5) node {$P$};
			
			
			\node[label=above:{\scriptsize $\{1\}$}] (A2) at (0+4,0) {};
			\node[label=above:{\scriptsize $\{2\}$}] (B2) at (1+4,0) {};
			\node[label=above:{\scriptsize $\{3\}$}] (C2) at (2+4,0) {};

			\node[label=left:{\scriptsize $\{1,2\}$}] (D2) at (0.5+4,-0.7) {};
			\node[label=right:{\scriptsize $\{2,3\}$}] (D22) at (1+0.5+4,-0.7) {};
			\node[label=below:{\scriptsize $\{1,2,3\}$}] (E2) at (1+4,-1.4) {};

			\draw [fill] (A2) circle [radius=.06];
			\draw [fill] (B2) circle [radius=.06];
			\draw [fill] (C2) circle [radius=.06];
			\draw [fill] (D2) circle [radius=.06];
			\draw [fill] (D22) circle [radius=.06];
			\draw [fill] (E2) circle [radius=.06];
			
			\draw (E2.center) -- (D2.center) -- (A2.center);
			\draw (D2.center) -- (B2.center);
			\draw (E2.center) -- (D22.center);
			\draw (D22.center) -- (B2.center);
			\draw (D22.center) -- (C2.center);
			
			\draw (1+4,-2.5) node {$Q$};
			
			
			\node[label=above:{\scriptsize $\{1\}$}] (A3) at (0+4+4,0) {};
			\node[label=above:{\scriptsize $\{2\}$}] (B3) at (1+4+4,0) {};
			\node[label=above:{\scriptsize $\{3\}$}] (C3) at (2+4+4,0) {};

			\node[label=left:{\scriptsize $\{1,2\}$}] (D3) at (0+4+4,-0.7) {};
			\node[label=right:{\tiny $\{1,3\}$}] (D32) at (1+4+4,-0.7) {};
			\node[label=right:{\scriptsize $\{2,3\}$}] (D33) at (2+4+4,-0.7) {};

			\node[label=below:{\scriptsize $\{1,2,3\}$}] (E3) at (1+4+4,-1.4) {};

			\draw [fill] (A3) circle [radius=.06];
			\draw [fill] (B3) circle [radius=.06];
			\draw [fill] (C3) circle [radius=.06];
			\draw [fill] (D3) circle [radius=.06];
			\draw [fill] (D32) circle [radius=.06];
			\draw [fill] (D33) circle [radius=.06];
			\draw [fill] (E3) circle [radius=.06];
			
			\draw (E3.center) -- (D3.center) -- (A3.center);
			\draw (D3.center) -- (B3.center);
			\draw (E3.center) -- (D32.center);
			\draw (D32.center) -- (A3.center);
			\draw (D32.center) -- (C3.center);
			\draw (E3.center) -- (D33.center);
			\draw (D33.center) -- (C3.center);
			\draw (D33.center) -- (B3.center);

			\draw (1+4+4,-2.5) node {$R$};

		\end{tikzpicture}
		\caption{The three reduced posets $P(M, \calF)$ with 
		$M=\{1,2,3\}$, and $\calF\neq\emptyset$, up to isomorphisms.}
		\label{fig:3poset}
	\end{center}
	\end{figure}

\noindent The posets from Figure \ref{fig:3poset} are reduced posets 
with $M = \{1,2,3\}$ and
\begin{align*}
	P &\ =\  P(M, \big\{\{1,2\}\big\}), \\
	Q &\ =\  P(M, \big\{\{1,2\}, M\setminus \{1\}\big\}), \text{ and }\\
	R &\ =\  P(M, \big\{\{1,2\}, M\setminus \{1\}, M\setminus \{2\}\big\}).
\end{align*} 
Clearly, $P(M, \emptyset) = \delta(\BB_3)$.

\begin{theorem}\label{thm:uccso-elotti}
	$\Pa_2\ \subsetneq\  \Q(\eps(R))\ \subsetneq \ 
	\Q(\eps(Q))\ \subsetneq\ \Q(\eps(P))$. In particular, the posets 
	$P$ and $Q$ do not generate a cover of $\Pa_2$.
\end{theorem}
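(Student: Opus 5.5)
The plan is to reduce every comparison to Lemma \ref{lem:relative-comparison} with $m=2$ and $|M|=3$. That lemma compares quasivarieties of the form $\Q(\BB_2,\eps(X))$, so the first step is to remove $\BB_2$: for $X\in\{P,Q,R\}$ we show $\Q(\eps(X))=\Q(\BB_2,\eps(X))$. By Lemma \ref{lem:egykomponens} it suffices to give a surjective pp-morphism $g:X\twoheadrightarrow\delta(\BB_2)$. Write $\delta(\BB_2)=\{\bot,a,b\}$ with $a,b$ maximal.

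We define $g$ on the maximal points by a surjection $\{1\},\{2\},\{3\}\mapsto a,?,b$, chosen per poset, and send $M$ to $\bot$. Each intermediate two-element set $A$ goes to $a$ or $b$ if both its maxima land on the same letter, and to $\bot$ otherwise. The choices are:
\begin{itemize}[label={}]
\item for $P$: $1,2\mapsto a$ and $3\mapsto b$, so $\{1,2\}\mapsto a$;
\item for $Q$: $1\mapsto a$ and $2,3\mapsto b$, so $\{1,2\}\mapsto\bot$ and $\{2,3\}\mapsto b$;
\item for $R$: $1\mapsto a$ and $2,3\mapsto b$, so $\{1,2\},\{1,3\}\mapsto\bot$ and $\{2,3\}\mapsto b$.
\end{itemize}
In each case $g$ is order preserving. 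The condition $\M(g(x))=g[\M(x)]$ holds by construction: a point is sent to a maximal element exactly when all its maxima go there, and to $\bot$ exactly when its maxima hit both $a$ and $b$. This gives $\Pa_2\subseteq\Q(\eps(X))$. The inclusion is proper because the bottom $M$ of $X$ has three maximal elements above it, so $\eps(X)\notin\Pa_2$ by Lemma \ref{lem:mplus1}.

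Next come the middle inclusions, via Lemma \ref{lem:relative-comparison}. To get $\Q(\BB_2,\eps(R))\subseteq\Q(\BB_2,\eps(Q))$ we need a bijection $\sigma$ carrying $\calF_Q=\{\{1,2\},\{2,3\}\}$ into $P(M,\calF_R)$; the identity works. Likewise the identity carries $\calF_P=\{\{1,2\}\}$ into $P(M,\calF_Q)$, which gives $\Q(\eps(Q))\subseteq\Q(\eps(P))$.

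For strictness, suppose $\Q(\eps(Q))\subseteq\Q(\eps(R))$. Then some bijection $\sigma$ would send the three two-element sets in $\calF_R$ into $\calF_Q\cup\{M\}\cup\{\text{singletons}\}$. Since $\sigma$ preserves cardinality, they would have to land in $\calF_Q$. But $\sigma$ is injective on subsets and $\calF_Q$ has only two members, a contradiction. The same counting, with two sets mapped into a one-element family, shows $\Q(\eps(P))\not\subseteq\Q(\eps(Q))$. This pigeonhole step is the only real obstacle, and it rests entirely on the "only if" direction of Lemma \ref{lem:relative-comparison}.

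Finally, $R$ contains every nonempty subset of $M=\{1,2,3\}$, so $R=U_3$. By the first step, $\Q(\eps(R))=\Q(\BB_2,\U_3)$, which is the unique cover of $\Pa_2$ by Theorem \ref{thm:unique-cover}. Since $\Q(\eps(Q))$ and $\Q(\eps(P))$ properly contain it, neither is a cover of $\Pa_2$.
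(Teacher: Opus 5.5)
Your proof is correct, but it takes a different route from the paper's. The paper proves the inclusions by exhibiting explicit surjective pp-morphisms $P\uplus P\twoheadrightarrow Q$ and $Q\uplus Q\twoheadrightarrow R$ and applying Lemma \ref{1.7} directly. It proves strictness by a hands-on argument: in a pp-morphism from copies of $Q$ onto $P$, some bottom of $Q$ must go to the bottom of $P$. The maxima are then matched bijectively, so the two points of $Q$ with two maxima are forced onto the single such point of $P$, which is a contradiction.

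You instead first absorb $\BB_2$ into each generator via Lemma \ref{lem:egykomponens}, which the paper also needs to get $\Pa_2\subseteq\Q(\eps(X))$. This lets Lemma \ref{lem:relative-comparison} apply. Both the inclusions and the non-inclusions then become statements about bijections of $\{1,2,3\}$. Your pigeonhole count is the paper's collapsing argument, already packaged in the (1)$\Rightarrow$(2) direction of that lemma.

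The paper's route gives concrete witnesses in terms of $\eps(P),\eps(Q),\eps(R)$ alone and is self-contained. Yours needs no pictures and is uniform: the same criterion compares any two quasivarieties of the form $\Q(\BB_m,\eps(P(M,\calF)))$ with $|M|=m+1$.

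Your appeal to Theorem \ref{thm:unique-cover} in the last paragraph is harmless, since there is no circularity, but it is unnecessary. The strict chain alone already places $\Q(\eps(R))$ strictly between $\Pa_2$ and each of $\Q(\eps(Q))$ and $\Q(\eps(P))$.
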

\begin{proof}
	For every $X\in\{P, Q, R\}$ there is $x\in X$ with $|\M(x)|=3$, thus
	$\Q(\eps(X))\neq \Pa_2$ by Lemma \ref{lem:mplus1}. It is also
	easy to check that there is a surjective pp-morphism 
	$X\twoheadrightarrow\delta(\BB_2)$, thus $\Pa_2\subseteq\Q(\eps(X))$ 
	by Lemma \ref{lem:egykomponens}. 
	Figure \ref{fig:3poset-pp} shows surjective pp-morphisms 
	$h:P\uplus P\twoheadrightarrow Q$ and $k:Q\uplus Q\twoheadrightarrow R$.
	By composition, there is $i:P\uplus P\uplus P\uplus P\twoheadrightarrow R$.
	By Lemma \ref{1.7} it follows that
	\[
		\Q(\eps(R))\ \subseteq \ \Q(\eps(Q))\ \subseteq\ \Q(\eps(P))\,.
	\]
	To see that these inclusions are proper, assume there was 
	$h: \biguplus Q\twoheadrightarrow P$. 
	By surjectivity there is a component of the disjoint union and a point 
	$x\in Q$ such that $h(x)$ is the bottom element of $P$. By maxima preservation
	$x$ must also be the bottom element of $Q$, because that is the only point with 
	exactly $3$ maxima above it. Then the maxima in this copy of $Q$ are mapped
	bijectively onto the maxima of $P$. Finally, the points in $Q$ with two maxima
	should be mapped to the only point in $P$ with two maxima. This means that 
	the two points in $Q$ with two maxima are collapsed, and that 
	contradicts
	$h$ being a pp-morphism. See the left side of Figure \ref{fig:3poset-pp-no}. The
	case that there is no $h: \biguplus R\twoheadrightarrow Q$ is analogous, 
	see right-hand side of Figure \ref{fig:3poset-pp-no}.
	It follows, by Lemma \ref{1.7}, that 
	\[
		\Q(\eps(R))\ \subsetneq \ \Q(\eps(Q))\ \subsetneq\ \Q(\eps(P))\,,
	\]
	in particular, $\Q(\eps(Q))$ and $\Q(\eps(P))$ cannot be covers of $\Pa_2$.
\end{proof}

    \begin{figure}[!ht]
    \begin{center}
    \begin{tikzpicture}[thick,scale=0.7]
			\node[label=above:{\scriptsize $a$}] (A1) at (0,0) {};
			\node[label=above:{\scriptsize $b$}] (B1) at (1,0) {};
			\node[label=above:{\scriptsize $c$}] (C1) at (2,0) {};

			\node[label=left:{\scriptsize $d$}] (D1) at (0.5,-0.7) {};
			\node[label=below:{\scriptsize $f$}] (E1) at (1,-1.4) {};

			\draw [fill] (A1) circle [radius=.06];
			\draw [fill] (B1) circle [radius=.06];
			\draw [fill] (C1) circle [radius=.06];
			\draw [fill] (D1) circle [radius=.06];
			\draw [fill] (E1) circle [radius=.06];
			
			\draw (E1.center) -- (D1.center) -- (A1.center);
			\draw (D1.center) -- (B1.center);
			\draw (E1.center) -- (C1.center);

			
			\node[label=above:{\scriptsize $c$}] (A12) at (0+3,0) {};
			\node[label=above:{\scriptsize $b$}] (B12) at (1+3,0) {};
			\node[label=above:{\scriptsize $a$}] (C12) at (2+3,0) {};

			\node[label=left:{\scriptsize $e$}] (D12) at (0.5+3,-0.7) {};
			\node[label=below:{\scriptsize $f$}] (E12) at (1+3,-1.4) {};

			\draw [fill] (A12) circle [radius=.06];
			\draw [fill] (B12) circle [radius=.06];
			\draw [fill] (C12) circle [radius=.06];
			\draw [fill] (D12) circle [radius=.06];
			\draw [fill] (E12) circle [radius=.06];
			
			\draw (E12.center) -- (D12.center) -- (A12.center);
			\draw (D12.center) -- (B12.center);
			\draw (E12.center) -- (C12.center);

			\node[label=above:{\scriptsize $a$}] (A2) at (0+4+5,0) {};
			\node[label=above:{\scriptsize $b$}] (B2) at (1+4+5,0) {};
			\node[label=above:{\scriptsize $c$}] (C2) at (2+4+5,0) {};

			\node[label=left:{\scriptsize $d$}] (D2) at (0.5+4+5,-0.7) {};
			\node[label=right:{\scriptsize $e$}] (D22) at (1+0.5+4+5,-0.7) {};
			\node[label=below:{\scriptsize $f$}] (E2) at (1+4+5,-1.4) {};

			\draw [fill] (A2) circle [radius=.06];
			\draw [fill] (B2) circle [radius=.06];
			\draw [fill] (C2) circle [radius=.06];
			\draw [fill] (D2) circle [radius=.06];
			\draw [fill] (D22) circle [radius=.06];
			\draw [fill] (E2) circle [radius=.06];
			
			\draw (E2.center) -- (D2.center) -- (A2.center);
			\draw (D2.center) -- (B2.center);
			\draw (E2.center) -- (D22.center);
			\draw (D22.center) -- (B2.center);
			\draw (D22.center) -- (C2.center);
						
			
			\node[label=above:{\footnotesize $h$}] at (7, -0.7) {$\twoheadrightarrow$};
			\node at (2.5, -1.3) {$\uplus$};
			
		
			\node[label=above:{\scriptsize $b$}] (A2) at (0,0-3.5) {};
			\node[label=above:{\scriptsize $a$}] (B2) at (1,0-3.5) {};
			\node[label=above:{\scriptsize $c$}] (C2) at (2,0-3.5) {};

			\node[label=left:{\scriptsize $d$}] (D2) at (0.5,-0.7-3.5) {};
			\node[label=right:{\scriptsize $e$}] (D22) at (1+0.5,-0.7-3.5) {};
			\node[label=below:{\scriptsize $g$}] (E2) at (1,-1.4-3.5) {};

			\draw [fill] (A2) circle [radius=.06];
			\draw [fill] (B2) circle [radius=.06];
			\draw [fill] (C2) circle [radius=.06];
			\draw [fill] (D2) circle [radius=.06];
			\draw [fill] (D22) circle [radius=.06];
			\draw [fill] (E2) circle [radius=.06];
		
			\draw (E2.center) -- (D2.center) -- (A2.center);
			\draw (D2.center) -- (B2.center);
			\draw (E2.center) -- (D22.center);
			\draw (D22.center) -- (B2.center);
			\draw (D22.center) -- (C2.center);
		
		
			\node[label=above:{\scriptsize $a$}] (A2) at (0+3,0-3.5) {};
			\node[label=above:{\scriptsize $c$}] (B2) at (1+3,0-3.5) {};
			\node[label=above:{\scriptsize $b$}] (C2) at (2+3,0-3.5) {};

			\node[label=left:{\scriptsize $e$}] (D2) at (0.5+3,-0.7-3.5) {};
			\node[label=right:{\scriptsize $f$}] (D22) at (1+0.5+3,-0.7-3.5) {};
			\node[label=below:{\scriptsize $g$}] (E2) at (1+3,-1.4-3.5) {};

			\draw [fill] (A2) circle [radius=.06];
			\draw [fill] (B2) circle [radius=.06];
			\draw [fill] (C2) circle [radius=.06];
			\draw [fill] (D2) circle [radius=.06];
			\draw [fill] (D22) circle [radius=.06];
			\draw [fill] (E2) circle [radius=.06];
		
			\draw (E2.center) -- (D2.center) -- (A2.center);
			\draw (D2.center) -- (B2.center);
			\draw (E2.center) -- (D22.center);
			\draw (D22.center) -- (B2.center);
			\draw (D22.center) -- (C2.center);

		
			\node[label=above:{\scriptsize $a$}] (A3) at (0+3+3+3,0-3.5) {};
			\node[label=above:{\scriptsize $b$}] (B3) at (1+3+3+3,0-3.5) {};
			\node[label=above:{\scriptsize $c$}] (C3) at (2+3+3+3,0-3.5) {};

			\node[label=left:{\scriptsize $d$}] (D3) at (0+3+3+3,-0.7-3.5) {};
			\node[label=right:{\tiny $e$}] (D32) at (1+3+3+3,-0.7-3.5) {};
			\node[label=right:{\scriptsize $f$}] (D33) at (2+3+3+3,-0.7-3.5) {};

			\node[label=below:{\scriptsize $g$}] (E3) at (1+3+3+3,-1.4-3.5) {};

			\draw [fill] (A3) circle [radius=.06];
			\draw [fill] (B3) circle [radius=.06];
			\draw [fill] (C3) circle [radius=.06];
			\draw [fill] (D3) circle [radius=.06];
			\draw [fill] (D32) circle [radius=.06];
			\draw [fill] (D33) circle [radius=.06];
			\draw [fill] (E3) circle [radius=.06];
		
			\draw (E3.center) -- (D3.center) -- (A3.center);
			\draw (D3.center) -- (B3.center);
			\draw (E3.center) -- (D32.center);
			\draw (D32.center) -- (A3.center);
			\draw (D32.center) -- (C3.center);
			\draw (E3.center) -- (D33.center);
			\draw (D33.center) -- (C3.center);
			\draw (D33.center) -- (B3.center);

			\node[label=above:{\footnotesize $k$}] at (7, -0.7-3.5) {$\twoheadrightarrow$};
			\node at (2.5, -1.3-3.5) {$\uplus$};
	
	\end{tikzpicture}
	\caption{Surjective pp-morphisms $h:P\uplus P\twoheadrightarrow Q$ and 
	$k:Q\uplus Q\twoheadrightarrow R$. Points with the same labels are identified.}
	\label{fig:3poset-pp}
	\end{center}
	\end{figure}

    \begin{figure}[!ht]
    \begin{center}
    \begin{tikzpicture}[thick,scale=0.7]
			\node[label=above:{\scriptsize $a$}] (A1) at (0+5,0) {};
			\node[label=above:{\scriptsize $b$}] (B1) at (1+5,0) {};
			\node[label=above:{\scriptsize $c$}] (C1) at (2+5,0) {};

			\node[label=left:{\scriptsize $e=d$}] (D1) at (0.5+5,-0.7) {};
			\node[label=below:{\scriptsize $f$}] (E1) at (1+5,-1.4) {};

			\draw [fill] (A1) circle [radius=.06];
			\draw [fill] (B1) circle [radius=.06];
			\draw [fill] (C1) circle [radius=.06];
			\draw [fill] (D1) circle [radius=.06];
			\draw [fill] (E1) circle [radius=.06];
			
			\draw (E1.center) -- (D1.center) -- (A1.center);
			\draw (D1.center) -- (B1.center);
			\draw (E1.center) -- (C1.center);

			\node[label=above:{\scriptsize $a$}] (A2) at (0,0) {};
			\node[label=above:{\scriptsize $b$}] (B2) at (1,0) {};
			\node[label=above:{\scriptsize $c$}] (C2) at (2,0) {};

			\node[label=left:{\scriptsize $d$}] (D2) at (0.5,-0.7) {};
			\node[label=right:{\scriptsize $e$}] (D22) at (1+0.5,-0.7) {};
			\node[label=below:{\scriptsize $f$}] (E2) at (1,-1.4) {};

			\draw [fill] (A2) circle [radius=.06];
			\draw [fill] (B2) circle [radius=.06];
			\draw [fill] (C2) circle [radius=.06];
			\draw [fill] (D2) circle [radius=.06];
			\draw [fill] (D22) circle [radius=.06];
			\draw [fill] (E2) circle [radius=.06];
			
			\draw (E2.center) -- (D2.center) -- (A2.center);
			\draw (D2.center) -- (B2.center);
			\draw (E2.center) -- (D22.center);
			\draw (D22.center) -- (B2.center);
			\draw (D22.center) -- (C2.center);
						
			
			\node[label=above:{\footnotesize $h$}] at (3, -0.7) {$\twoheadrightarrow$};

		
			\node[label=above:{\scriptsize $b$}] (A2) at (0+14,0) {};
			\node[label=above:{\scriptsize $a$}] (B2) at (1+14,0) {};
			\node[label=above:{\scriptsize $c$}] (C2) at (2+14,0) {};

			\node[label=left:{\scriptsize $f=d$}] (D2) at (0.5+14,-0.7) {};
			\node[label=right:{\scriptsize $e$}] (D22) at (1+0.5+14,-0.7) {};
			\node[label=below:{\scriptsize $g$}] (E2) at (1+14,-1.4) {};

			\draw [fill] (A2) circle [radius=.06];
			\draw [fill] (B2) circle [radius=.06];
			\draw [fill] (C2) circle [radius=.06];
			\draw [fill] (D2) circle [radius=.06];
			\draw [fill] (D22) circle [radius=.06];
			\draw [fill] (E2) circle [radius=.06];
		
			\draw (E2.center) -- (D2.center) -- (A2.center);
			\draw (D2.center) -- (B2.center);
			\draw (E2.center) -- (D22.center);
			\draw (D22.center) -- (B2.center);
			\draw (D22.center) -- (C2.center);

		
			\node[label=above:{\scriptsize $a$}] (A3) at (0+3+3+3,0) {};
			\node[label=above:{\scriptsize $b$}] (B3) at (1+3+3+3,0) {};
			\node[label=above:{\scriptsize $c$}] (C3) at (2+3+3+3,0) {};

			\node[label=left:{\scriptsize $d$}] (D3) at (0+3+3+3,-0.7) {};
			\node[label=right:{\tiny $e$}] (D32) at (1+3+3+3,-0.7) {};
			\node[label=right:{\scriptsize $f$}] (D33) at (2+3+3+3,-0.7) {};

			\node[label=below:{\scriptsize $g$}] (E3) at (1+3+3+3,-1.4) {};

			\draw [fill] (A3) circle [radius=.06];
			\draw [fill] (B3) circle [radius=.06];
			\draw [fill] (C3) circle [radius=.06];
			\draw [fill] (D3) circle [radius=.06];
			\draw [fill] (D32) circle [radius=.06];
			\draw [fill] (D33) circle [radius=.06];
			\draw [fill] (E3) circle [radius=.06];
		
			\draw (E3.center) -- (D3.center) -- (A3.center);
			\draw (D3.center) -- (B3.center);
			\draw (E3.center) -- (D32.center);
			\draw (D32.center) -- (A3.center);
			\draw (D32.center) -- (C3.center);
			\draw (E3.center) -- (D33.center);
			\draw (D33.center) -- (C3.center);
			\draw (D33.center) -- (B3.center);

			\node[label=above:{\footnotesize $k$}] at (12.5, -0.7) {$\twoheadrightarrow$};

	\end{tikzpicture}
	\caption{No pp-morphisms $h:Q\twoheadrightarrow P$ and 
	$k:R\twoheadrightarrow Q$ mapping bottoms onto bottoms: $h(\M(e))\neq \M(h(e))$,
	$k(\M(f))\neq \M(k(f))$.}
	\label{fig:3poset-pp-no}
	\end{center}
	\end{figure}

\begin{theorem}\label{thm:uccso}
	$\Pa_2\prec \Q(\eps(R))$ is a cover.
\end{theorem}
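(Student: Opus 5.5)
Since $R = P(M,\mathcal{F})$ with $M=\{1,2,3\}$ and $\mathcal{F}=\{\{1,2\},\{2,3\},\{1,3\}\}$, every nonempty subset of $M$ occurs as a point of $R$. Hence $R$ is isomorphic to $U_3 = P\bigl(M,\wp(M)\setminus\{\emptyset\}\bigr)$, and $\eps(R)\cong \U_3$. The plan is therefore to reduce the statement to Theorem \ref{thm:unique-cover} with $m=2$, which says $\Pa_2\prec\Q(\BB_2,\U_3)$. What remains is to show $\Q(\eps(R)) = \Q(\BB_2,\U_3)$.

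First, $\Q(\eps(R))\subseteq \Q(\BB_2,\U_3)$ is immediate from $\eps(R)\cong\U_3$. For the converse inclusion I only need $\BB_2\in\Q(\eps(R))$. By Lemma \ref{lem:egykomponens} this amounts to a surjective pp-morphism $g:R\twoheadrightarrow\delta(\BB_2)$, which the proof of Theorem \ref{thm:uccso-elotti} already asserts exists. To be safe I will exhibit it explicitly. Write $\delta(\BB_2)$ as $P(\{1,2\})$. Define $g$ by applying the map $\pi:M\to\{1,2\}$ with $\pi(1)=1$ and $\pi(2)=\pi(3)=2$ pointwise to subsets: $g(A)=\pi[A]$. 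Each image $\pi[A]$ is a nonempty subset of $\{1,2\}$, hence a point of $P(\{1,2\})$. Since direct images preserve inclusion, $g$ is order-preserving. The pp-condition $\M(\pi[A])=g[\M(A)]$ holds exactly as in the proof of Lemma \ref{lemma-1}. The map is surjective: $M\mapsto\{1,2\}$, $\{1\}\mapsto\{1\}$ and $\{2\}\mapsto\{2\}$.

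Combining these, $\Q(\eps(R))=\Q(\BB_2,\eps(R))=\Q(\BB_2,\U_3)$, and Theorem \ref{thm:unique-cover} for $m=2$ yields the cover $\Pa_2\prec\Q(\eps(R))$. The main point needing care is the isomorphism $R\cong U_3$. This means checking that $\mathcal{F}$ together with $M$ and the singletons exhausts all nonempty subsets, which holds because all three two-element subsets are listed. The rest is routine bookkeeping with the earlier lemmas.

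If a self-contained argument avoiding Theorem \ref{thm:unique-cover} were preferred, I would instead take any quasivariety $\K\supsetneq\Pa_2$. Lemmas \ref{vanveges} and \ref{lemma-1} give a reduced $P(N,\calG)$ with $|N|=3$ and $\eps(P(N,\calG))\in\K$. Lemma \ref{lem:relative-comparison} then places $\eps(R)$ in $\Q(\BB_2,\eps(P(N,\calG)))\subseteq\K$, since every nonempty $\sigma[A]$ lies in $R$. This is the same argument specialized to $m=2$.
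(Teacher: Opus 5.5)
Your proposal is correct and follows the paper's proof. You identify $R$ with $U_3$ and show $\Q(\eps(R))=\Q(\BB_2,\U_3)$ via the surjective pp-morphism $U_3\twoheadrightarrow\delta(\BB_2)$ induced by a surjection $3\twoheadrightarrow 2$ (which the paper leaves as routine and you verify explicitly), then invoke Theorem~\ref{thm:unique-cover} with $m=2$.
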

\begin{proof}
	Theorem \ref{thm:unique-cover} says that the unique cover of 
	$\Pa_2$ is $\Q(\BB_2, \U_3)$. To prove the present statement
	we only need to check that $\Q(\BB_2, \U_3) = \Q(\eps(R))$. 
	Note that $R$ and $U_3$ are the same posets and thus we need
	$\Q(\BB_2, \U_3) = \Q(\U_3)$. Checking that any surjection $3\twoheadrightarrow 2$
	induces a surjective pp-morphism $U_3\twoheadrightarrow B_2$ is a routine and
	left to the reader. Applying Lemma \ref{1.7} concludes the proof. 
\end{proof}

The case $m=2$ is an important exception: for $m\geq 3$ the proof of
Theorem \ref{thm:uccso} can not be reproduced. In more details:

\begin{theorem}
	The inclusion $\Q(\U_{m+1})\subseteq \Q(\BB_m, \U_{m+1})$ is proper
	for every $m\geq 3$. For $m\leq 2$ we have 
	$\Q(\U_{m+1})= \Q(\BB_m, \U_{m+1})$.
\end{theorem}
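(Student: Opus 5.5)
By Lemma \ref{1.7}, $\Q(\U_{m+1})=\Q(\BB_m,\U_{m+1})$ holds if and only if $\BB_m\in\Q(\U_{m+1})$. By Lemma \ref{lem:egykomponens} (applied to the quasivariety $\Q(\eps(U_{m+1}))$, which contains $\Pa_m$ iff it contains $\BB_m$), this is equivalent to the existence of a surjective pp-morphism $g:U_{m+1}\twoheadrightarrow B_m$, where $B_m=\delta(\BB_m)$ has a bottom $\bot$ and $m$ maximal points. So the plan is to decide, for each $m$, whether such a $g$ exists.

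\emph{Small cases.} For $m\le 2$ we construct $g$ explicitly. For $m=0$, $U_1$ is a single point and equals $B_0$. For $m=1$, map the whole of $U_2$ (which has three points) onto the two-element chain $B_1$ as follows: send both maximal points to the top and the bottom $\{1,2\}$ to $\bot$. One checks $\M(g(x))=g[\M(x)]$ directly. For $m=2$, take a surjection $\pi:3\twoheadrightarrow 2$ and set $g(A)=\pi[A]$, read in $U_2=B_2$. Then $A\mapsto\pi[A]$ is order preserving and commutes with maxima, exactly as in the proof of Lemma \ref{lemma-1}; this is the routine check already invoked in Theorem \ref{thm:uccso}.

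\emph{Large cases, $m\ge 3$.} Suppose $g:U_{m+1}\to B_m$ is a surjective pp-morphism. Maxima map to maxima, so $g$ induces a map $\sigma$ from $m+1$ into the $m$ maximal points of $B_m$, and $\M(g(A))=\sigma[A]$ for every nonempty $A\subseteq m+1$. The only points of $B_m$ are $\bot$, whose maxima form the full $m$-set, and the maximal points, each with a single maximum above it. Hence every $A$ must satisfy $|\sigma[A]|\in\{1,m\}$. Surjectivity at $\bot$ forces some $A$ with $\sigma[A]$ of size $m$, so $\sigma$ is onto. Now choose $a,b$ with $\sigma(a)\ne\sigma(b)$ and take $A=\{a,b\}$. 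Then $|\sigma[A]|=2$, which lies strictly between $1$ and $m$ once $m\ge3$. This is a contradiction, so no such $g$ exists.

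Finally, combine these. By Lemma \ref{lem:egykomponens}, $\BB_m\notin\Q(\U_{m+1})$ for $m\ge3$, while trivially $\BB_m\in\Q(\BB_m,\U_{m+1})$, so the inclusion is proper. The only step needing care is the reduction to a single copy of $U_{m+1}$, and Lemma \ref{lem:egykomponens} already handles it: the preimage of $\bot$ lies in one component, and restricting to that component gives a surjection.
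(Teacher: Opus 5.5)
Your proof is correct and follows the paper's argument. For $m\ge 3$ you restrict a putative surjective pp-morphism $U_{m+1}\twoheadrightarrow B_m$ to the maxima, pick $a,b$ with distinct images, and note that $\{a,b\}$ would have to map to a point of $B_m$ with exactly two maxima above it; for $m=2$ you use the same map $A\mapsto\pi[A]$ as Theorem~\ref{thm:uccso}. The differences are cosmetic: you reduce to a single copy of $U_{m+1}$ via Lemma~\ref{lem:egykomponens} rather than Lemma~\ref{1.7}, and for $m=1$ you give an explicit map $U_2\twoheadrightarrow B_1$ where the paper uses $U_2=B_2$ and $\Pa_1\subseteq\Pa_2$.
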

\begin{proof}
	As for the case $m=0$: $U_1 = B_0$, hence $\Q(\U_1)=\Q(\BB_0, \U_1)$. 
	As for $m=1$: $U_2=B_2$, thus $\Q(\U_2) = \Q(\BB_2) = \Q(\BB_2, \BB_1)$. 
	The case $m=2$ is essentially Theorem \ref{thm:uccso}. 
	For $m\geq 3$ it is enough to show $\BB_m\notin \Q(\U_{m+1})$. 
	By way of contradiction suppose $\BB_m\in \Q(\U_{m+1})$. By Lemma \ref{1.7}
	there is a pp-morphism $f:U_{m+1}\twoheadrightarrow B_m$ whose image
	contains the bottom of $B_m$. Such a map must be surjective by the
	pp-condition. Restrict $f$ to the $m+1$ maximal points of $U_{m+1}$. This
	gives a surjection $g: m+1\twoheadrightarrow m$, and there must exist
	$a, b$ with $g(a)\neq g(b)$. The point $\{a, b\}\in U_{m+1}$ has exactly
	two maximal points above it, hence the pp-condition gives
	\[
		\M(f(\{a,b\})) = f[\M(\{a,b\})] = \{ g(a), g(b) \}.
	\]
	Therefore, $B_m$ should contaion a point having exactly two maximal points above it. 
	This is impossible if $m\geq 3$. 
	
\end{proof}

\subsection*{Funding}
Not applicable.

\subsection*{Ethical approval}
Not applicable. 

\subsection*{Competing interests} 
Not applicable. 

\subsection*{Authors' contributions} 
Not applicable.

\subsection*{Availability of data and materials}
Not applicable. 

\end{document}